\newtheorem{theorem}{Theorem}[section]
\newtheorem{lemma}[theorem]{Lemma}
\newtheorem{proposition}[theorem]{Proposition}
\newtheorem{example}[theorem]{Example}
\newtheorem{remarks}[theorem]{Remark}
\newtheorem{remark}[theorem]{Remark}
\newtheorem{definition}[theorem]{Definition}
\numberwithin{equation}{section}
\title{\bf Positive curvature property for sub-Laplace on nilpotent Lie group of rank two  
}
\author{Bin Qian  \thanks{ Department of Mathematics, Changshu Institute of Technology, Changshu, Jiangsu 215500,  China, and School of Mathematical Sciences, Fudan University,
220 Handan Road, Shanghai 200433.
 E-mail: binqiancn@yahoo.com.cn, binqiancn@gmail.com}
 }\date{}
\newcommand{\rr}{\mathbb{R}}
\def\LL{\mathcal L}
\def\<{\langle}
\def\>{\rangle}
\def\bequ{\begin{equation}}
\def\nequ{\end{equation}}
\def\bdef{\begin{defn}}
\def\ndef{\end{defn}}
\def\bthm{\begin{thm}}
\def\nthm{\end{thm}}
\def\bprop{\begin{prop}}
\def\nprop{\end{prop}}
\def\brmk{\begin{remarks}}
\def\nrmk{\end{remarks}}
\def\bexam{\begin{example}}
\def\nexam{\end{example}}
\def\blem{\begin{lemma}}
\def\nlem{\end{lemma}}
\def\bcor{\begin{cor}}
\def\ncor{\end{cor}}
\def\bprf{\begin{proof}}
\def\nprf{\end{proof}}
\def\bdes{\begin{description}}
\def\ndes{\end{description}}
\def\part{\partial}
\begin{document}
\maketitle
\begin{abstract}
In this note,  we concentrate on the sub-Laplace on the nilpotent Lie group of rank two, which is the infinitesimal generator of the diffusion generated by $n$ Brownian motions and their $\frac{n(n-1)}2$ L\'evy area processes, which is the simple extension of the sub-Laplace  on the Heisenberg group $\mathbb{H}$. In order to study contraction properties of the heat kernel, we show that, as in the cases of the  Heisenberg group
 and  the three Brownian motion model, the restriction of the sub-Laplace  acting
on radial functions (see Definition \ref{radial fun}) satisfies a  positive Ricci curvature
condition (more precisely a  $CD(0, \infty)$ inequality, see Theorem \ref{positive}, whereas the operator itself does not  satisfy any $CD(r,\infty)$ inequality. From this we may deduce some useful, sharp gradient bounds for the associated heat kernel. It can be seen a generalization of the paper \cite{Qian2}.

\end{abstract}

\textbf{Keywords}: $\Gamma_2$ curvature, Heat kernel, Gradient
estimates, Sub-Laplace, Nilpotent Lie groups.

\vskip10pt \textbf{2000 MR Subject Classification:} 60J60 58J35

\section{Introduction }

\subsection*{ The elliptic case}
 Let $M$ be a complete Riemannian manifold of dimension
$n$ and let $\LL:=\Delta+\nabla h$, where $\Delta$ is the
Laplace-Beltrami operator. For $t\ge 0$, denote by $P_t$ the heat
semigroup generated by $\LL$ (that is formally $P_{t}= \exp(t\LL)$).
For smooth enough function $f,g$,  one defines (see \cite{Ba97})
$$\aligned
\Gamma(f,g)&=|\nabla f|^2=\frac12(\LL fg-f\LL g-g\LL f),\\
 \Gamma_2(f,f)&=\frac12\big(\LL\Gamma(f,f)-2\Gamma(f,\LL
f)\big)=|\nabla\nabla f|^2+(Ric-\nabla\nabla h)(\nabla f,\nabla
f).\endaligned$$ We have the following well-known proposition, see
Proposition 3.3 in \cite{Ba97}.

\noindent{\bf Proposition A.} For every real $\rho\in \rr$, the
following are equivalent \bdes
\item{(i).} $CD(\rho,\infty)$ holds. That is
$\Gamma_2(f,f)\ge \rho\Gamma(f,f)$.

\item{(ii).} For $t\ge0$, $\Gamma(P_tf,P_tf)\le e^{-2\rho
t}P_t(\Gamma(f,f)).$

\item{(iii).} For $t\ge 0$, $\Gamma(P_tf,P_tf)^{\frac12}\le e^{-\rho
t}P_t(\Gamma(f,f)^{\frac12}).$

\ndes
Moreover, in \cite{Eng06}, Engoulatov obtained the following
gradient estimates for the associated heat kernels $p(t,x,y)$ in Riemannian manifolds.

 \noindent {\bf Theorem B. } Let $M$ be a complete Riemannian of
dimension $n$ with Ricci curvature bounded from below, $Ric(M)\ge
-\rho$, $\rho\ge0$. \bdes
\item{(i).} Suppose a non-collapsing condition is satisfies on $M$,
namely, there exist $t_0>0$, and $\nu_0>0$, such that for any $x\in
M$, the volume of the geodesic ball of radius $t_0$ centered at $x$
is not too small, $ Vol(B_x(t_0))\ge \nu_0.$ Then there exist two
constants $C(\rho,n,\nu_0,t_0)$ and $\bar{C}(t_0)>0$, such that
$$
|\nabla \log p(t,x,y)|\le
C(\rho,n,\nu_0,t_0)\left(\frac{d(x,y)}{t}+\frac1{\sqrt{t}}\right),
$$uniformly on $(0, \bar{C}(t_0)]\times M\times M$, where $d(x,y)$
is  the Riemannian distance between $x$ and $y$.

\item{(ii).} Suppose that $M$ has a diameter bounded by $D$, Then
there exists a constant $C(\rho, n)$ such that
$$
|\nabla\log p(t,x,y)|\le
C(\rho,n)\left(\frac{D}{t}+\frac1{\sqrt{t}}+\rho\sqrt{t}\right),
$$
uniformly on $(0,\infty)\times M\times M$.

 \ndes
 Recently, X. D. Li \cite{XLi} has shown that the non-collapsing condition can be removed.
\subsection*{The hypoelliptic case}
 More recent, some focus has been set on some degenerate (hypoelliptic) situations, where the methods used for the elliptic case do not apply. Among the simplest examples of such situation is the  Heisenberg group, denote $p_t$  the heat
kernel of  Markov semigroup $P_t$ at the origin $o$ with respect to Lebesgue measures on $\rr^3$, see \cite{Gaveau,HQLi,HLi1} for  the precise formulas.  H. Q. Li obtain the sharp gradient estimate  for the heat kernel $p_t$ and the contraction property for the semigroup $P_t$, which generalizes and strengthens the result of Driver and Melcher, \cite{DM}.

\noindent {\bf Theorem C. } For any $g\in \mathbb{H}$, we have
\begin{equation}\label{Hgradient} |\nabla \log
p_t|(g)\le \frac{Cd(g)}{t}, \end{equation}
where $d(g)$ is the Carnot-Carath\'eodory distance between $o$ and $g$. In addition, we have
  \begin{equation}\label{heisenberg-Li} \forall f\in
C_0^\infty(\mathbb{H}), \ \forall t\ge 0,\
\Gamma(P_tf,P_tf)^{\frac12}\le
C_1P_t\big(\Gamma(f,f)^{\frac12}\big). \end{equation}  (See also  D. Bakry et
al. \cite{BBBC} for alternate proofs.)

  The method adopted relies intensely on the precise  asymptotic estimates for the heat kernel. In the similar way, H. Q. Li and his collaborator in \cite{HLi1,HLi2,HL},   show that (\ref{Hgradient}) and (\ref{heisenberg-Li}) hold in the Heisenberg type group $H(2n,m)$, see also \cite{HLi3} for the Grushin operators. For $SU(2)$ group,  F. Baudoin and M. Bonnefont show that a modified form of
(\ref{Hgradient}) and (\ref{heisenberg-Li}) hold in \cite{BB}.   The author himself shows that the gradient estimate (\ref{Hgradient}) holds  for  the three Brownian motion model in \cite{Qian2}, see also \cite{Qian1} for the high dimensional Heisenberg group.

  In this note, we  shall focus on  the  nilpotent Lie group of rank two (It can also be called the $n$-Brownian motion model),  which can be seen an another typical simpe example of hypoelliptic operator, but  the structure is more complex than the Heisenberg (type) groups. Up to the author's knowledge, the method of H.Q. Li, \cite{HL}-\cite{HLi3}, fails  to study the precise gradient bounds in this context.

As the three Brownian motion model \cite{Qian2}, we shall first look at the symmetries, that is we shall characterize all the vector fields  which commute with the
sub-Laplace $\Delta$, see Proposition \ref{linear-prop}. The
infinitesimal rotations are those vector fields which vanish at the orgin $o$
and a radial function is a function which vanishes on infinitesimal
rotations. In this case, although the Ricci curvature is everywhere
$-\infty$, refer to \cite{Juillet,BBBC}, we shall prove that the
$\Gamma_2$ curvature is  positive along the radial directions,
as it is the case for the Heisenberg group and three Brownian motions model, see Theorem \ref{positive}. The difficulty  for general $n$($n>3$) is that it is not easy to prove the positive curvature property directly even in the case of $4$ Brownians motion model, since it is not easy to get the explicit, well organized solutions to the linear equations as the ones in the Proposition 3.1 in \cite{Qian2}. Even it is getting more and more complex as $n$ grows. Inspired by the work of  T. Melcher, c.f. \cite{thesis}, we will  firstly prove $L^1$ heat kernel inequality for radial functions (see definition \ref{radial fun}), and hence the positive property of Bakry-Emery $\Gamma_2$ curvature holds along the radial directions. As a consequence, the same form of gradient estimate (\ref{Hgradient}) holds  by combining the method developed by F. Baudoin and M. Bonnefont in
\cite{BB} with the method in \cite{HLi1}. It is worth
recalling that in \cite{BBBQ}, D. Bakry et al. have obtained the
Li-Yau type gradient estimates for the three dimensional model group
by applying $\Gamma_2$-techniques which plays an essential role in the paper. In our setting, it is easy to see
that this type of gradient estimate  also holds.

\section{Nilpotent Lie group of rank two--$n-$dimensional Brownian motion model $N_{n,2}$}
Let us recall the definition  of nilpotent Lie group of rank two, see \cite{Gaveau,VSC}.
\begin{definition}  A linear space $\frak{g}$ is a nilpotent Lie group of rank two if $\frak{g}=V_1\oplus V_2$, where $V_1,V_2$ are vector subspace of $\frak{g}$, satisfying $V_2=V_1\oplus V_1,[V_1,V_2]=0$ and $[V_2,V_2]=0$. We denote $\frak{N}_{n,2}$ the nilpotent algebra with $n$ generators, denote $N_{n,2}$ the simple connected Lie group of rank two with the algebra $\frak{N}_{n,2}$.
\end{definition}
Suppose $V_1$ is spanned by  $X_i, 1\le i\le n$ and  $V_2$ is generated by $Y_{ij}:=[X_i, X_j], i<j$. In this case,  we have $[X_k, Y_{ij}]=0$, $1\le i<j\le n, 1\le k\le n$. The nature sub-Laplace operator
 is defined by \bequ\label{sublaplacian} \Delta=\sum_{i=1}^nX_i^2. \nequ

   Under the certain exponential map on $N_{n,2}$,  without loss of any generality,  we can assume $X_i,Y_{ik}$ has the following form, see Lemma
4.1 in \cite{Gaveau},    \bequ\label{expression}
\begin{cases}
X_i&=\part_i+\frac12\left(\sum_{k<i}x_k\hat{\part}_{ki}-\sum_{k>i}x_k\hat{\part}_{ik}\right),\\
Y_{ik}&=\hat{\partial}_{ik},\end{cases} \nequ
for $1\le i,k\le n $, with the notation $\part_i=\frac{\part}{\part x_i},\
\hat{\part}_{ik}=\frac{\part}{\part y_{ik}}$. The reason why we call it the $n$ Brownian motions model is that
$\frac12\Delta$ is the infinitesimal generator of the Markov process
$\big(\{B_i\}_{1\le i\le
n},\{\frac12\int_0^tB_idB_{i+1}-B_{i+1}dB_{i}\}_{1\le i\le n}\big)$,
where $\{B_i\}_{1\le i\le n}$ are $n$  real standard independent
Brownian motions.

By convention,  for all $t\ge0$, denote $P_t:=e^{t\Delta}$  the associated heat
semigroup generated by the canonical sub-Laplacian  $\Delta$, $p_t$ the heat kernel of $P_t$ at the origin  $o$
with respect to the Lebesgue measure on $\rr^{\frac{n(n+1)}{2}}$.

For any function $f,g$ defined on $N_{n,2}$, the carr\'e du champ operators are, see \cite{Ba97,Ledoux},
\begin{align*}
\Gamma(f,g)&:=\frac12(\Delta(fg)-f\Delta g-g\Delta f)\\
&=
\sum_{i=1}^nX_ifX_ig,
\end{align*}
and
\begin{align*}
 \Gamma_2(f,f)&:=\frac12(\Delta\Gamma(f,f)-2\Gamma(f,\Delta f))\\
 &=\sum_{i,j}(X_iX_jf)^2+2\sum_{i<j}X_jfX_iY_{ij}f-X_ifX_jY_{ij}f.
\end{align*}
 Here the mixed term $\sum_{i<j}X_jfX_iY_{ij}f-X_ifX_jY_{ij}f$ prevent the existence of any constant $\rho$ such that the curvature dimensional condition $CD(\rho,\infty)$ holds, see \cite{Juillet}. Nevertheless, we have the following Driver-Melcher inequality, see \cite{DM,M08},
$$
\Gamma(P_tf,P_tf)\le CP_t\Gamma(f,f).
$$
for some positive constant $C$. The constant $C$ here can be
expressed explicitly following  the method in \cite{BBBC} by dilation equation. For the Bakry-Emery heat kernel inequality  (\ref{heisenberg-Li}), the methods deeply rely on the precise estimate on the heat kernel $p_t$ and its differentials (see \cite{HQLi,BBBC,HL}). Up to the author's knowledge, these precise estimates are not known for the model $N_{n,2}$, neither the heat kernl inequality (\ref{heisenberg-Li}) (or so-called H. Q. Li inequality). Nevertheless, we shall prove that one of the key gradient estimates (\ref{Hgradient}) holds, which would be a first step for the proof of the H. Q. Li inequality in this context. We remark that it does hold for radial functions, see Proposition \ref{L1}, see also \cite{thesis} for some other function classes.

For the heat kernel $p_t$, we have the following property, for $\vec{x}\in\rr^{n},\vec{y}\in \rr^{\frac{n(n-1)}{2}}$, see \cite{Gaveau}, \begin{equation}\label{scale}
p_t(\vec{x},\vec{y})=t^{-n^2/2}p_1(\vec{x}/\sqrt{t},\vec{y}/t),
\end{equation}
hence it is enough to study the heat kernel $p_t$ at time $t=1$. For $t=1$, we have for $\vec{x}=(x_1,\cdots,x_n)^t\in\rr^n,\ \vec{y}=(y_{12},\cdots,y_{n-1,n})^t\in\rr^{\frac{n(n-1)}2}$, see P. 125, Theorem
 1 in  \cite{Gaveau},
\begin{equation}\label{express0} p(\vec{x},\vec{y}):=p_1(\vec{x},\vec{y})=(2\pi)^{-\frac{n(n+2)}2}\int_{\rr^{\frac{n(n-1)}2}}\exp{\left(-i
\sum_{k<l}\alpha_{kl} y_{kl}\right)}\prod_{j=1}^{[\frac{n}2]}\varphi_j(A,\vec{x})\prod_{k<l}d\alpha_{kl},
\end{equation}
where
$$
\varphi_j(A,\vec{x})=\frac{P_{2j-1}}{2}\left(\sinh \frac{P_{2j-1}}{2}\right)^{-1}\exp\left(-\frac{(\Omega^t\vec{x})_{2j-1}^2+(\Omega^t\vec{x})_{2j}^2}{2}
\frac{P_{2j-1}}{2}\coth \frac{P_{2j-1}}{2}\right),
$$
with $A$ is an antisymmetric matrix with  the entries $\{\alpha_{kl}\}_{k<l}$ in the upper triangular and $\Omega$ is the orthogonal matrix satisfying $\Omega^tA\Omega=P$, where $P$ is a antisymmetric matrix formed by diagonal block of
$$\left(
\begin{matrix}
0&P_{2k-1}\\
-P_{2k-1}&0
\end{matrix}\right),\ \ 1\le k\le \frac{n}2,\  \mbox{if}\ \mbox{n is even},
$$
and if $k$ is odd, $1\le k\le [\frac{n}2]$, the last block  is $1\times1$ zero matrix, where $iP_{2j-1}\  (P_{2j-1}\in\rr^+)$ is the eigenvalue of the antisymmetric matrix $A$. Without loss of any generality, we can assume
$P_1\ge P_3\ge \cdots\ge P_{2[\frac{n}2]}-1$.

The natural distance, induced by the sub-Laplace $\Delta$, is
the Carnot-Carath\'eodory distance $d$.  As usual, it can be defined
from the gradient operator $\Gamma$ only  by, see \cite{Ba97,VSC},
\bequ\label{ccdistance}
d(g_1,g_2):=\sup_{\{f:\Gamma(f)\le1\}}f(g_1)-f(g_2). \nequ For this
distance, we have the invariant and scaling properties, see
\cite{Gaveau,VSC}.
$$d(g_1,g_2)=d(g_2^{-1}\circ g_1, o):=d(g_2^{-1}\circ g_1),\
\mbox{and}\  d(\gamma \vec{x},\gamma^2\vec{y})=\gamma d(\vec{x},\vec{y}),$$ for
all $g_1,g_2\in {N}_{n,2}$, $\gamma\in\rr^+$ and
$\vec{x}\in \rr^n, \vec{y}\in\rr^{\frac{n(n-1)}2}$.

\section{Radial functions}
 In this section, we will give the precise definition of radial functions. To this end, we study the rotation vectors in $N_{n,2}$.

 Denote
$\mathcal{T}$ be the linear space for such vectors (spanned by the vectors $X_i,Y_j$, $1\le i,jle n$), which commute to the sub-Laplace
$\Delta$. Now we trivially know that for $i<k$, $Y_{ik}$ commutes to
$\Delta$ since $Y_{ik}$ commutes to $X_i$. Actually, there are lots
of vectors who share this property. For simplicities, for $1\le i<j\le n,$, denote
\begin{equation}\label{rotation}
\theta_{ij}=x_j\part_i-x_i\part_j+\sum_{1\le
k<i}y_{kj}\hat{\part}_{ki}-y_{ki}\hat{\part}_{kj}
+\sum_{i<k<j}y_{ik}\hat{\part}_{kj}-y_{kj}\hat{\part}_{ik}
 +\sum_{j<k\le n}y_{jk}\hat{\part}_{ik}-y_{ik}\hat{\part}_{jk},
\end{equation}
and
$$\hat{X}_{i}=\part_i-\frac12\left(\sum_{k<i}x_k\hat{\part}_{ki}-\sum_{k>i}x_k\hat{\part}_{ik}\right).$$
In fact, $\{X_i\}_{1\le i\le n}$ ($\{\hat{X}_i\}_{1\le i\le n}$) can be called the left (right) invariant vectors respectively. For the vectors $\theta_{ij}$, we have the following Lie relations: for $1\le i<j<k\le n$,
 \bequ\label{rotation-Lie}
[\theta_{ij},\theta_{ik}]=\theta_{jk}.
 \nequ

Let us state the main result in this section.

 \begin{proposition}\label{linear-prop}
$$ \mathcal{T}=Linear\{\hat{X}_i, Y_i, \theta_{ij}, 1\le
i,j\le n\}.
$$Here $Linear$ means the linear combination of vectors, with the constant
coefficients.

 \end{proposition}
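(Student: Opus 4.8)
The plan is to prove the two inclusions separately. The inclusion $\supseteq$ is a direct verification, while the inclusion $\subseteq$ reduces to solving an overdetermined first-order system. For $\supseteq$, the fields $Y_{ij}$ are central, so $[Y_{ij},X_k]=0$ and hence $[Y_{ij},\Delta]=0$ at once. The fields $\hat X_i$ are right-invariant whereas $\Delta=\sum_k X_k^2$ is built from the left-invariant fields $X_k$; since left- and right-invariant vector fields always commute one has $[\hat X_i,X_k]=0$ for all $i,k$, and therefore $[\hat X_i,\Delta]=\sum_k([\hat X_i,X_k]X_k+X_k[\hat X_i,X_k])=0$. This can also be checked by hand from \eqref{expression}: writing $X_i=\partial_i+\frac12 A_i$ and $\hat X_i=\partial_i-\frac12 A_i$ with $A_i$ the $y$-part, the bracket collapses because $[A_i,A_j]=0$ (the $A_i$ only differentiate the $y$-variables while having $x$-coefficients).

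For the rotation fields the key computation is the identity $[\theta_{ij},X_k]=\delta_{ik}X_j-\delta_{jk}X_i$, i.e.\ $\theta_{ij}$ acts on the horizontal frame $\{X_k\}$ as the infinitesimal rotation in the $(i,j)$-plane, and in particular maps $V_1$ into $V_1$. Granting this,
\[
[\theta_{ij},\Delta]=\sum_k\big([\theta_{ij},X_k]X_k+X_k[\theta_{ij},X_k]\big)=(X_jX_i-X_iX_j)+(X_iX_j-X_jX_i)=0,
\]
the cancellation being exactly the antisymmetry of the rotation action; the Lie relations \eqref{rotation-Lie} confirm that these fields close into a copy of $\mathfrak{so}(n)$. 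I would establish $[\theta_{ij},X_k]=\delta_{ik}X_j-\delta_{jk}X_i$ by a careful computation from \eqref{rotation} and \eqref{expression}, where the three groups of $y$-terms in $\theta_{ij}$ (summation index below $i$, between $i$ and $j$, and above $j$) are precisely what is needed to convert the $\partial$-derivatives into the correct $\hat\partial$-terms of $X_i$ and $X_j$.

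For the reverse inclusion $\subseteq$, I would write an arbitrary vector field in the global left-invariant frame as $Z=\sum_i f_iX_i+\sum_{p<q}h_{pq}Y_{pq}$ with smooth coefficients, and impose $[Z,\Delta]=0$. Using that $[X_i,X_k]=\pm Y_{ik}$ is central and $[Y_{pq},X_k]=0$, one finds
\[
[Z,X_k]=\sum_i f_i[X_i,X_k]-\sum_i (X_kf_i)X_i-\sum_{p<q}(X_kh_{pq})Y_{pq},
\]
and then expands $[Z,\Delta]=\sum_k([Z,X_k]X_k+X_k[Z,X_k])$, separating the result by differential order. The second-order (principal symbol) part vanishes if and only if the horizontal coefficients satisfy the Killing-type system $X_if_j+X_jf_i=0$ for $1\le i,j\le n$, together with relations expressing each $X_kh_{pq}$ linearly in the $f_i$ (arising from the central brackets $f_i[X_i,X_k]$); the lower-order parts then give the remaining integrability conditions.

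The final and most delicate step is to solve this system for general $n$. When all $f_i\equiv 0$ the equations force $X_kh_{pq}=0$ for every $k$; since the $X_k$ are bracket-generating, $Y_{ij}h_{pq}=[X_i,X_j]h_{pq}=0$ as well, so $dh_{pq}=0$ and each $h_{pq}$ is constant, producing exactly the central fields $Y_{pq}$. For the horizontal part one differentiates the Killing system once more and uses $[X_i,X_j]=Y_{ij}$ to show that the solutions $f_i$ cannot carry genuine $y$-dependence and must be affine–antisymmetric in $x$, namely $f_i=c_i+\sum_j\omega_{ij}x_j$ with $\omega$ antisymmetric; the constants $c_i$ reproduce the right-invariant fields $\hat X_i$ and the antisymmetric part reproduces the $\theta_{ij}$. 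The dimension count $n+\binom{n}{2}+\binom{n}{2}=n^2$ then matches the span on the right-hand side and closes the argument. I expect the main obstacle to lie precisely here: controlling the overdetermined system and ruling out spurious, $y$-dependent solutions uniformly in $n$, which is the same kind of explicit, well-organized linear algebra that, as noted after \eqref{rotation}, becomes heavy as $n$ grows, with the index bookkeeping for the three-block structure of $\theta_{ij}$ being the hardest part. As an independent check, any $Z\in\mathcal T$ must be an infinitesimal isometry of the Carnot--Carath\'eodory metric \eqref{ccdistance}, since its flow commutes with $P_t$ and hence preserves $d$; the isometry algebra of $N_{n,2}$ is exactly left-translations together with $\mathfrak{so}(n)$, in agreement with the stated basis.
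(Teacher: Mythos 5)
Your overall strategy is the same as the paper's: write an arbitrary element of $\mathcal{T}$ in the left-invariant frame as $Z=\sum_i f_iX_i+\sum_{p<q}h_{pq}Y_{pq}$, expand $[Z,\Delta]$ by differential order, and solve the resulting overdetermined system (your Killing-type equations $X_if_j+X_jf_i=0$ and the relations tying $X_kh_{pq}$ to the $f_i$ are exactly the paper's (\ref{equ-zero0})--(\ref{equ-zero3}) with $f_i=a_i$, $h_{pq}=b_{pq}$). Your explicit verification of the inclusion $\supseteq$ via $[\theta_{ij},X_k]=\delta_{ik}X_j-\delta_{jk}X_i$ and the commutation of left- and right-invariant fields is correct and is a useful addition that the paper leaves implicit.

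The genuine gap is at the step you yourself flag as the main obstacle: you assert that one ``differentiates the Killing system once more and uses $[X_i,X_j]=Y_{ij}$'' to rule out $y$-dependence and force the $f_i$ to be affine--antisymmetric, but you do not carry this out, and the Killing system alone does not close. Differentiating $X_if_j+X_jf_i=0$ and antisymmetrizing only expresses second derivatives $X_iX_jf_k$ in terms of the unknown quantities $Y_{ij}f_k$; to show these vanish you must route through the vertical equations. The paper's Lemma \ref{linear-lem1} does this in a specific chain: apply the centrality $[X_i,Y_{ij}]=0$ to the coefficients $b_{kl}$, combine with $X_kb_{ij}=0$ ($k\neq i,j$) to get $X_i^2X_jb_{kl}=0$, then feed in $X_ib_{ij}=-a_j$, $X_jb_{ij}=a_i$ to extract first $X_i^2a_j=0$, then $X_iX_ja_l=0$ for all triples, and only then $Y_{ij}a_l=[X_i,X_j]a_l=0$. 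A second round of the same game (Lemma \ref{linear-lem2}, after the change of frame $c_{ij}=b_{ij}+\tfrac12(a_jx_i-a_ix_j)$) is needed to pin down the vertical coefficients as affine functions with the precise $y$-linear part that assembles into $\theta_{ij}$; your argument for the $h_{pq}$ only treats the case $f\equiv 0$. Finally, note that your ``independent check'' via the isometry algebra of the Carnot--Carath\'eodory metric is circular here: identifying that algebra as left-translations plus $\mathfrak{so}(n)$ is essentially equivalent to the proposition being proved, so it cannot serve as corroboration.
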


\begin{remarks}
 In particularly, for $n=2$, this case is called the Heisenberg
group, we can actually induce a group act such that $X_i$ are
corresponding to the left vector fields, see \cite{Gaveau,BGG,BBBC}.
In this case we have $\mbox{dim} \mathcal{T}=4$, and
$\mathcal{T}=\mbox{Span} \{\hat{X}_1,\hat{X}_2, Y, \theta\}$, where
$$\hat{X}_1=\part_{x_1}+\frac{x_2}{2}\part_y,\
 \hat{X}_2=\part_{x_2}-\frac{x_1}{2}\part_y,\
 \theta=x_1\part_{x_2}-x_2\part_{x_1}.$$
 Here  "Span" means the linear combination with the constant
 functions.

 For $n=3$. Actually, with changing the sign, we can also introduce
a group act such that $X_i$ are corresponding the left vector
fields. Explicitly, see \cite{Gaveau}$$ \aligned
X_1&=\part_1-\frac{x_2}2Y_3+\frac{x_3}2Y_2,\ \ \hat{X}_1=\part_1+\frac{x_2}2Y_3-\frac{x_3}2Y_2;\\
X_2&=\part_2-\frac{x_3}2Y_1+\frac{x_1}2Y_3,\ \ \hat{X}_1=\part_1+\frac{x_2}2Y_3-\frac{x_3}2Y_2;\\
X_3&=\part_3-\frac{x_1}2Y_2+\frac{x_2}2Y_1,\ \
\hat{X}_3=\part_3+\frac{x_1}2Y_2-\frac{x_2}2Y_1,
\endaligned
$$
where $\hat{X_i}$ are the right vector fields,
$Y_i=\part_{y_i}:=\hat{\part}_i$. In this case, we have $$ \mathcal{T}=\mbox{Linear}\{\hat{X}_i,
Y_{i},\theta_i, 1\le i\le 3\},
$$
where$$ \aligned
\theta_1&=x_2\part_3-x_3\part_2+y_2\hat{\part}_3-y_3\hat{\part}_2,\\
\theta_2&=x_3\part_1-x_1\part_3+y_3\hat{\part}_1-y_1\hat{\part}_3,\\
\theta_3&=x_1\part_2-x_2\part_1+y_1\hat{\part}_2-y_2\hat{\part}_1.
\endaligned
$$
It has been shown in \cite{Qian2}.
\end{remarks}


 To proof this Proposition, suppose any vector
$X=\sum_ia_iX_i+\sum_{i<j}b_{ij}Y_{ij}$,
 satisfying $[\Delta, X]=0$, where $a_i, b_{ij}$ are the functions
 in $\{x_{\cdot},y_{\cdot\cdot}\}$. Denote
 $W_{ij}=X_iX_j+X_jX_i$, then $X_iX_j=\frac12(W_{ij}+Y_{ij})$. Note
 that
$$
\aligned
\hskip 2pt [\Delta, X] &=\sum_{i,j}X_i^2a_jX_j+2X_ia_jX_iX_j+2a_jX_iY_{ij}\\
&\hskip 24pt +\sum_{i<j,k} X_k^2b_{ij}Y_{ij}+2X_kb_{ij}X_kY_{ij}\\
&=\sum_{i,j}X_i^2a_jX_j+\sum_{i<j}(X_ia_j+X_ja_i)W_{ij}+2X_ia_iX_i^2
+\sum_{i<j}(X_ia_j-X_ja_i+\sum_kX_k^2b_{ij})Y_{ij}\\
&\hskip 24pt+2\sum_{i<j}(a_jX_iY_{ij}-a_iX_jY_{ij})+\sum_{k\neq
i,j,i<j
}2X_kb_{ij}X_kY_{ij}+2\sum_{i<j}X_ib_{ij}X_iY_{ij}+2\sum_{i<j}X_jb_{ij}X_jY_{ij}
\endaligned
$$
thus we have \begin{eqnarray} &\ &\sum_iX_i^2a_j=0,\label{equ-zero0}\\
X_ia_j=-X_ja_i,& \ &X_ia_i=0,\label{equ-zero1}\\
\sum_kX_k^2b_{ij}=2X_ja_i, \ i<j,&\
&X_kb_{ij}=0, k\neq i,j, \ i<j,\label{equ-zero2}\\
X_ib_{ij}=-a_j,\ i<j,&\  &X_jb_{ij}=a_i, i<j.\label{equ-zero3}
\end{eqnarray}

\blem\label{linear-lem1} $a_i, 1\le i\le n$ are linear functions in
$\{x_i,1\le i\le n\}$, they are independent on $\{y_{ik},i<k\}$.
\nlem
 \bprf \bdes
\item{\bf Step1:} For fixed $i$, $j>i$, $[X_i,Y_{ij}]=0$,
 Combining (\ref{equ-zero2}),
 we have for $i\neq k,l, k<l$,
 $X_iY_{ij}b_{kl}=Y_{ij}X_ib_{kl}=0$. Since $Y_{ij}=[X_i,X_j]$,
 again using the fact (\ref{equ-zero2}), we have
 $X_i^2X_jb_{kl}=0$, $i\neq k,l, k<l, i<j$. By choosing $l=j$, and
 using the fact (\ref{equ-zero3}), we have $X_i^2a_k=0, k<j,k\neq
 i$. In the same way, we have $X_i^2a_l=0, \ i<j<l. $ Combining
 (\ref{equ-zero1}), we have
 \bequ\label{square-two1}X_i^2a_j=0,\
  \mbox{for}\  1\le i,j\le n.
  \nequ

\item{\bf Step2:} Again for $i<j,l$, $[X_i, Y_{ij}]b_{il}=0$. By the fact that $[X_i,X_j]=Y_{ij}$ and (\ref{equ-zero3}), we have
$X_i^2X_jb_{il}+X_iX_ja_l=X_jX_ia_l-X_iX_ja_l$. Again we use the
fact that  $X_i^2X_jb_{il}=0$, which has been proved above, we get
$$2X_iX_ja_l=X_jX_ia_l, \ \mbox{for}\ i<j,l.$$ For $i<j<l$, start
from the fact that $[X_j,Y_{ij}]b_{jl}$=0,  we have
$$
2X_jX_ia_l=X_iX_ja_l, \ i<j<l.
$$
Combining the above two equations, and (\ref{square-two1}) we have
$$ X_iX_ja_l=X_jX_ia_l=0,\  i\le j\le l. $$ Using $X_ia_j=-X_ja_i$,
we have \bequ\label{square-two} X_iX_ja_l=0, \ 1\le i,j,l\le n.
\nequ
\item{\bf Step3:} By the fact that $Y_{ij}=[X_i,X_j]$, with
(\ref{square-two}), we have $Y_{ij}a_l=0$, for $1\le l\le n,\ i<j$.
Thus $\{a_k\}_{1\le k\le n}$ is independent on $\{y_{ik}\}_{1\le
i<k\le n}$, that is $a_k$ is the function in $\{x_i\}_{1\le i\le
n}$. From the definition of $X_k$, we have $X_ka_j=\part_{k}a_j$. By
(\ref{square-two}), i.e. for $1\le i,k\le n$, $\part^2_ia_k=0$, thus
we can conclude $a_k$ in linear function in $x_i$.
 \ndes
\nprf Thus we can give the explicit expression for $a_i$, for $1\le i\le
n$,  \bequ\label{expression-a} a_i=\sum_{j=1}^nA_{ij}x_j+B_i,\nequ
where $A_{ij}, B_i$ are constants and $A_{ij}$ satisfies
$A_{ij}=-A_{ji}$.

Note that we can write
\bequ\label{expression-linear}X=\sum_{i=1}^na_i\part_i+\sum_{i<j}c_{ij}\hat{\partial}_{ij},\nequ
with $c_{ij}=b_{ij}+\frac12(a_jx_i-a_ix_j)$. We have the following
Lemma \blem\label{linear-lem2} For $1\le i,j\le n$, $c_{ij}$ are
linear functions in $\{x_{\cdot},y_{\cdot\cdot}\}$.\nlem \bprf
 With the relation between $b_{ij}$ with $c_{ij}$ and the fact
 $X_ia_i=0$, we have
 For $i<j$,
\bequ\label{linear-equ}\aligned X_ib_{ij}=-a_j&\Longleftrightarrow
\frac12a_j=\frac12x_iX_ia_j-X_ic_{ij},\\
X_jb_{ij}=a_i&\Longleftrightarrow
\frac12a_i=\frac12x_jX_ja_i+X_jc_{ij},\\
X_kb_{ij}=0&\Longleftrightarrow
X_kc_{ij}=\frac12(x_iX_ka_j-x_jX_ka_i),\ k\neq i,j.
 \endaligned\nequ
Using $[X_i,X_j]=Y_{ij}$, the expression (\ref{expression-a}) and
(\ref{linear-equ}), through computation, we have,  for $1\le i<j\le
n,\ 1\le k<l\le n$, \bequ\label{linear-equ1}
Y_{ij}c_{kl}=\begin{cases} A_{il},& 1\le i<j=k<l\le n;\\
A_{jk},&1\le k<i=l<j\le n;\\
A_{ki},&1\le i,k<j=l\le n;\\
A_{lj},&1\le i=k<j,l\le n;\\
0,& \mbox{others}.
\end{cases} \nequ
 Combining
(\ref{linear-equ}) with (\ref{linear-equ1}) and the definition of
$X_i$  , through computation we have, \bequ\label{linear-equ2}
\part_ic_{kl}=\begin{cases}-\frac12B_l,\ & i=k;\\
\frac12B_k,\ &i=l;\\
0,\ &\mbox{others.}
\end{cases}
\nequ (\ref{linear-equ1}) and (\ref{linear-equ2}) yield that for
$1\le i,j\le n$, $\deg{c_{ij}}\le 1$ and we have the explicit
expression for $c_{kl}$,  for $k<l$, \bequ\label{expression-c}
c_{kl}=-\frac12B_lx_k+\frac12B_kx_l+\sum_{1\le
i<k}A_{il}y_{ik}+\sum_{l< j\le n}A_{jk}y_{lj}+\sum_{1\le
i<l}A_{ki}y_{il}+\sum_{k<j\le n}A_{lj}y_{kj}+D_{kl},\nequ where
$D_{kl}$ are constants. Combining (\ref{expression-a}), for $1\le
i<j\le n$, choose $ A_{ij}=-A_{ji}=1$ (respectively $D_{ij}=1$), the
other constants 0, we have $X=\theta_{ij}$ (respectively $Y_{ij}$).
And for $1\le i\le n$, choosing $B_i=1$ and the other constants 0,
we have $X=\hat{X}_i$.

we complete the proof.

\end{proof} \begin{proof}[Proof of Proposition \ref{linear-prop}] By the above
Lemmas, we easily complete the proof. In the concrete case of
$n=2,3$, from the equations (\ref{expression-a}) and
(\ref{expression-c}), we can easily conclude.

\end{proof}

\begin{definition}\label{radial fun} A $C^2$ function $f$:
$N_{n,2}\to\mathbb{R}$, is called radial if it satisfies  $\theta_{ij}f=0$, for all
$1\le i<j\le n$. \end{definition}

\begin{remarks}
  \begin{description}

 \item{(i).}  By the Lie relations (\ref{rotation-Lie}), $f$ is radial if and only if  $\theta_{1k}f=0$, for all $1<k\le n$. Clearly, constant functions are radial.

 \item{(ii).} If both $f,g$ are radial,  so do $k_1f\pm k_2g$, $f\cdot
g$, for $k_1,k_2\in \rr$.
\item{(iii).} In particular the heat kernel $(p_t)_{t\ge0}$ is radial. The reason is that for any function $f$, $1< k\le n$, $\theta_{1k} f(0)=0$ and
$\{\theta_{1k}\}_{1< k\le n}$ commute with $\Delta$, whence they commute
with the semigroup $P_t=e^{t\Delta}$. Hence, for any function $f$, one
has $P_{t}\theta_{1k} f=0$, which, taking the adjoint of $\theta_{1k}$
under the Lebesgue measure, which is $-\theta_{1k}$, shows that for the density $p_{t}$ of the
heat kernel at the origin $o$, one has $\theta_{1k} p_{t}=0$. This explains why
any information about the radial functions in turns give information
on the heat kernel itself.

\item{(iv).} In the Ph.D thesis of T. Melcher \cite{thesis}, the radial definition $f$ is defined by $f(\vec{x},\vec{y}_{\cdot\cdot})=g(|\vec{x}|, \vec{y}_{\cdot\cdot})$ for some smooth enough $g$. One defect in this definition is that the heat kernel $p_t$ is not radial. To some extent, our definition of radial functions is more reasonable.
\end{description}
\nrmk

\section{$\Gamma_2$ curvature}
In this section, we will prove the associated $\Gamma_2$ curvature is positive on $N_{n,2}$. It generalizes the same property for the three Brownian motion model $N_{3,2}$ (c.f. \cite{Qian2},  Proposition 3.1.). Up to the author's knowledge,  the method adopted in \cite{Qian2} is not adapted easily in our setting,  since it is not easy to express the solutions  to the associated $\frac{n(n-1)}{2}$ equations regularly, even in the case of $n=4$. To say nothing of proving the nonnegative property of $\Gamma_2$ curvature. Either it is hard to find out the certain parameter variables on which the radial functions depend for the case of $n>3$ (We remark here that in the case of $n=3$, radial functions depend on the norm of $\vec{x}$, $\vec{y}$, and their intersection angle $\<\vec{x},\vec{y}\>$). Inspired  by the ad hoc methods adopted in Section 2.9.2 in the thesis of T. Melcher, c.f. \cite{thesis}, we will prove $L^1$ heat kernel inequality for radial functions $f$, and hence the nonnegative property for $\Gamma_2$ curvature holds along the radial directions.

For simplification, denote the  following two gradient operators
$$\nabla f:=(X_1f,X_2f,\cdots,X_nf), \hat{\nabla} f:=(\hat{X}_1f,\hat{X}_2f,\cdots,\hat{X}_nf).$$

Let us first to the following key Lemma.

\begin{lemma}\label{equivgradient}
For any radial function $f$, we have
\begin{equation}\label{equal}
\sum_{i=1}^n(X_if)^2=\sum_{i=1}^n(\hat{X}_if)^2.
\end{equation}
\end{lemma}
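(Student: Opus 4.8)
The plan is to exploit two elementary algebraic identities that follow immediately from the explicit forms \eqref{expression} of $X_i$ and the definition of $\hat X_i$, namely
\[
X_i+\hat X_i=2\partial_i,\qquad
X_i-\hat X_i=\sum_{k<i}x_k\hat\partial_{ki}-\sum_{k>i}x_k\hat\partial_{ik}=:Z_i .
\]
Factoring the difference of squares as $(X_if)^2-(\hat X_if)^2=(X_if-\hat X_if)(X_if+\hat X_if)=2(\partial_if)(Z_if)$ and summing over $i$ reduces the claim \eqref{equal} to proving that $\sum_{i=1}^n(\partial_if)(Z_if)=0$ for every radial $f$. Expanding $Z_if$ and relabelling the summation indices — the term $x_k\hat\partial_{ki}f$ with $k<i$ becomes, on setting $a=k,\ b=i$, a sum over $a<b$, and the term $x_k\hat\partial_{ik}f$ with $k>i$ is handled symmetrically — this collapses to
\[
\sum_{i=1}^n(\partial_if)(Z_if)=\sum_{a<b}(\hat\partial_{ab}f)\,\big(x_a\partial_bf-x_b\partial_af\big).
\]

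The second step is to bring in the radial hypothesis. Since, by \eqref{rotation}, one has $\theta_{ab}=x_b\partial_a-x_a\partial_b+(\text{$y$-terms})$, the condition $\theta_{ab}f=0$ lets me eliminate the horizontal factor and replace it by a purely vertical one:
\[
x_a\partial_bf-x_b\partial_af=\sum_{k<a}\big(y_{kb}\hat\partial_{ka}f-y_{ka}\hat\partial_{kb}f\big)+\sum_{a<k<b}\big(y_{ak}\hat\partial_{kb}f-y_{kb}\hat\partial_{ak}f\big)+\sum_{b<k}\big(y_{bk}\hat\partial_{ak}f-y_{ak}\hat\partial_{bk}f\big).
\]
Substituting this into the previous display turns the quantity to be shown to vanish into a triple sum, each of whose terms has the shape $y_{\cdot\cdot}\,(\hat\partial_{\cdot\cdot}f)(\hat\partial_{\cdot\cdot}f)$, with the three index pairs always drawn from a common triple of indices.

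The main work — and the step I expect to be the real obstacle — is then a bookkeeping argument organizing this triple sum by unordered triples $\{p,q,r\}$ with $p<q<r$. For a fixed triple there are exactly three contributing choices of the outer pair $(a,b)\in\{(p,q),(p,r),(q,r)\}$, the remaining index serving as the summation variable $k$ and placing the term in the $k<a$, the $a<k<b$, or the $b<k$ regime respectively. Writing $g_{ab}:=\hat\partial_{ab}f$, I would collect the six resulting terms and group them according to which of $y_{pq},y_{pr},y_{qr}$ they carry; each group then takes the form $y_{\cdot\cdot}\,(g_\alpha g_\beta-g_\beta g_\alpha)$ and vanishes by commutativity of multiplication. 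Since every triple contributes zero, the whole sum is zero, giving $\sum_i(\partial_if)(Z_if)=0$ and hence \eqref{equal}. The only delicate point is the careful matching of signs and index ranges across the three regimes; once the terms are sorted by their $y$-variable the cancellation is transparent.
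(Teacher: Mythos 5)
Your proof is correct and follows essentially the same route as the paper's: both reduce \eqref{equal} to the vanishing of the cross term $\sum_i(\partial_i f)(Z_i f)$, rewrite it as $\sum_{a<b}(\hat\partial_{ab}f)(x_a\partial_b f-x_b\partial_a f)$ by relabelling, substitute the radial condition $\theta_{ab}f=0$, and cancel the resulting $y$-quadratic terms in pairs (your grouping by unordered triples is the same cancellation the paper carries out via its six sums $I_1,\dots,I_6$). The final bookkeeping step you flag as delicate does go through exactly as you predict, each of the three $y$-coefficients appearing twice with opposite signs.
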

\begin{proof}
Recall that
$$
X_if=\part_if+\frac12\left(\sum_{k<i}x_k\hat{\part}_{ki}f-\sum_{k>i}x_k\hat{\part}_{ik}f\right),
$$
and $$ \hat{X}_{i}f=\part_if-\frac12\left(\sum_{k<i}x_k\hat{\part}_{ki}f-\sum_{k>i}x_k\hat{\part}_{ik}f\right).
$$
Hence
$$
\sum_{i=1}^n(X_if)^2=\star+\sum_{i=1}^n\partial_if\left(\sum_{k<i}x_k\hat{\part}_{ki}f-\sum_{k>i}x_k\hat{\part}_{ik}f\right),
$$
and
$$
\sum_{i=1}^n(\hat{X}_if)^2=\star-\sum_{i=1}^n\partial_if\left(\sum_{k<i}x_k\hat{\part}_{ki}f-\sum_{k>i}x_k\hat{\part}_{ik}f\right),
$$
where $\star$ is sum of square of $\partial_i f$ and  $\frac12\left(\sum_{k<i}x_k\hat{\part}_{ki}f-\sum_{k>i}x_k\hat{\part}_{ik}f\right)$.
Thus to proof the desired result, we only need to prove
\begin{equation}\label{zero}
I:=\sum_{i=1}^n\partial_if\left(\sum_{k<i}x_k\hat{\part}_{ki}f-\sum_{k>i}x_k\hat{\part}_{ik}f\right)=0.
\end{equation}
Notice that
\begin{align*}
I&=\sum_{i=1}^n\sum_{k=1}^{i-1}x_k\partial_if\hat{\partial}_{ki}f-\sum_{i=1}^n\sum_{k=i+1}^nx_k\partial_if\hat{\partial}_{ik}f\\
&=\sum_{k=1}^n\sum_{i=k+1}^{n}x_k\partial_if\hat{\partial}_{ki}f-\sum_{i=1}^n\sum_{k=i+1}^nx_k\partial_if\hat{\partial}_{ik}f\\
&\stackrel{(1)}{=}\sum_{i=1}^n\sum_{k=i+1}^{n}x_i\partial_kf\hat{\partial}_{ik}f-\sum_{i=1}^n\sum_{k=i+1}^nx_k\partial_if\hat{\partial}_{ik}f\\
&=\sum_{i=1}^n\sum_{k=i+1}^{n}\left(x_i\partial_kf-x_k\partial_if\right)\hat{\partial}_{ik}f,
\end{align*}
where equality $(1)$ follows from the exchange between $i$ and $k$ in the first term. Since $f$ is radial ($\theta_{ij}f=0$), by rotation vectors $\theta_{ij}$ defined in (\ref{rotation}), we have
\begin{align*}
I&=\sum_{i=1}^n\sum_{j=i+1}^{n}\left(x_i\partial_jf-x_j\partial_if\right)\hat{\partial}_{ij}f\\
&=\sum_{i=1}^n\sum_{j=i+1}^{n}\hat{\partial}_{ij}f\Biggr(\sum_{k=1 }^{i-1}(y_{kj}\hat{\partial}_{ki}f-y_{ki}\hat{\partial}_{kj}f)+\sum_{k=i+1}^{j-1}(y_{ik}\hat{\partial}_{kj}f-y_{kj}\hat{\partial}_{ik}f)\\
&\hskip 100pt +\sum_{k=j+1}^n(y_{jk}\hat{\partial}_{ik}f-y_{ik}\hat{\partial}_{jk}f)\Biggr)\\
&=\sum_{i=1}^n\sum_{j=i+1}^{n}\sum_{k=1 }^{i-1}y_{kj}\hat{\partial}_{ki}f\hat{\partial}_{ij}f-\sum_{i=1}^n\sum_{j=i+1}^{n}\sum_{k=1 }^{i-1}y_{ki}\hat{\partial}_{kj}f\hat{\partial}_{ij}f\\
&+\sum_{i=1}^n\sum_{j=i+1}^{n}\sum_{k=i+1}^{j-1}y_{ik}\hat{\partial}_{kj}f
\hat{\partial}_{ij}f-\sum_{i=1}^n\sum_{j=i+1}^{n}\sum_{k=i+1}^{j-1}y_{kj}
\hat{\partial}_{ik}f\hat{\partial}_{ij}f\\
&+\sum_{i=1}^n\sum_{j=i+1}^{n}\sum_{k=j+1}^ny_{jk}\hat{\partial}_{ik}f\hat{\partial}_{ij}f
-\sum_{i=1}^n\sum_{j=i+1}^{n}\sum_{k=j+1}^ny_{ik}\hat{\partial}_{jk}f\hat{\partial}_{ij}f\\
&:=I_1+I_2+I_3+I_4+I_5+I_6.
\end{align*}

By change the order of summation, we have
$$
I_1=-I_6, I_2=-I_3, I_4=-I_5.
$$

Thus we finish the proof.
\end{proof}
\begin{remarks}
The relation (\ref{equal}) holds for any functions, which satisfies (\ref{zero}).  We would like to recommend  the readers to the Proposition 2.28  in T. Melcher 's Ph. D thesis \cite{thesis} for other class of functions satisfying (\ref{equal}).
\end{remarks}
Now let us statement the $L^1$ heat kernel inequality for the radial functions, where the right invariant vector fields play an essential role.

\begin{proposition}\label{L1} For any radial function $f\in C_c^{\infty}(N_{n,2})$, we have, for any $t\ge0$,
$$
|\nabla P_tf|\le P_t(|\nabla f|).
$$
\end{proposition}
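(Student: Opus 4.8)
The plan is to convert the desired \emph{left}-invariant gradient bound into a \emph{right}-invariant one, where it becomes a soft consequence of the Markov property of $P_t$, and then to use Lemma \ref{equivgradient} as the bridge back. First I would record two commutation facts supplied by Proposition \ref{linear-prop}: since both the right-invariant fields $\hat{X}_i$ and the rotation fields $\theta_{ij}$ lie in $\mathcal{T}$, they commute with $\Delta$, and hence (on $C_c^\infty$, using the smoothing of the hypoelliptic semigroup) with $P_t=e^{t\Delta}$. This yields, on the one hand, $\hat{X}_i P_t f = P_t \hat{X}_i f$ for each $i$, and on the other hand, that radiality is preserved: if $\theta_{ij}f=0$ for all $i<j$, then $\theta_{ij}P_tf = P_t\theta_{ij}f = 0$, so $P_tf$ is again radial (and smooth).

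Since both $f$ and $P_tf$ are radial, Lemma \ref{equivgradient} applies to each, giving $|\nabla P_tf| = |\hat{\nabla} P_tf|$ and $|\nabla f| = |\hat{\nabla} f|$. Therefore it suffices to establish the right-invariant inequality $|\hat{\nabla} P_tf| \le P_t(|\hat{\nabla} f|)$. Here radiality is doing the essential work: for a general, non-radial function one would only obtain this right-invariant bound, which does not control $|\nabla P_tf|$; it is precisely the identity of Lemma \ref{equivgradient} that lets the right-invariant estimate feed back into the left-invariant quantity we actually want.

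For the right-invariant bound, I would fix a point $g$ and let $v=(v_1,\dots,v_n)$ be the unit vector in the direction of $\hat{\nabla} P_tf(g)$ (the case $\hat{\nabla} P_tf(g)=0$ being trivial since the right-hand side is nonnegative). Using the commutation $\hat{X}_i P_t = P_t \hat{X}_i$, linearity of $P_t$, the fact that $P_t$ is positivity-preserving, and the Cauchy--Schwarz bound $\big|\sum_i v_i \hat{X}_i f\big| \le |v|\,|\hat{\nabla} f| = |\hat{\nabla} f|$, one gets
$$
|\hat{\nabla} P_tf|(g) = \sum_{i=1}^n v_i\,\hat{X}_i P_tf(g) = P_t\Big(\sum_{i=1}^n v_i\,\hat{X}_i f\Big)(g) \le P_t\Big(\Big|\sum_{i=1}^n v_i\,\hat{X}_i f\Big|\Big)(g) \le P_t(|\hat{\nabla} f|)(g).
$$
Chaining this with the two equalities from Lemma \ref{equivgradient} gives $|\nabla P_tf| \le P_t(|\nabla f|)$, as desired.

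The argument is conceptually just Jensen's inequality for the heat-kernel probability measure, so the real content is organizational rather than computational. I expect the main obstacle to be the rigorous justification of the domain-level manipulations: that $P_tf$ is smooth and that $\hat{X}_i$ may be interchanged with $P_t$ for $f\in C_c^\infty(N_{n,2})$ (which I would anchor on hypoellipticity and the commutation $[\hat{X}_i,\Delta]=0$ from Proposition \ref{linear-prop}), together with confirming that radiality is genuinely preserved along the flow so that Lemma \ref{equivgradient} is applicable to $P_tf$ and not merely to $f$.
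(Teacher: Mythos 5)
Your proposal is correct and rests on the same three ingredients as the paper's proof: the commutation $\hat{X}_iP_t=P_t\hat{X}_i$ coming from $[\hat{X}_i,\Delta]=0$, the vector-valued Jensen inequality $|P_t\hat{\nabla}f|\le P_t(|\hat{\nabla}f|)$, and Lemma \ref{equivgradient} to trade $|\hat{\nabla}\cdot|$ for $|\nabla\cdot|$ on radial functions. Where you diverge is in how the estimate is globalized. The paper works only at the origin, where $\nabla h(o)=\hat{\nabla}h(o)$ holds for \emph{every} $h$ (so no radiality of $P_tf$ is needed there), and then asserts the inequality at a general point ``by translation invariance.'' That last step is the tersest part of the paper's argument, since radiality is an origin-centered notion and is not preserved by left translation, so the usual reduction $|\nabla P_tf|(g)=|\nabla P_t(f\circ L_g)|(o)$ lands on a function to which the origin estimate does not directly apply. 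You instead observe that $\theta_{ij}P_tf=P_t\theta_{ij}f=0$, i.e.\ that $P_tf$ is itself radial, so Lemma \ref{equivgradient} applies pointwise to both $f$ and $P_tf$ and the whole chain $|\nabla P_tf|(g)=|\hat{\nabla}P_tf|(g)\le P_t(|\hat{\nabla}f|)(g)=P_t(|\nabla f|)(g)$ runs at an arbitrary $g$ with no translation step at all. This costs you one extra commutation fact (for the $\theta_{ij}$, which Proposition \ref{linear-prop} supplies) and buys a cleaner, fully pointwise argument that sidesteps the delicate point in the paper's version; the domain-level caveats you flag (smoothness of $P_tf$ and interchanging $\hat{X}_i$, $\theta_{ij}$ with $P_t$) are exactly the right things to worry about and are implicitly assumed in the paper as well.
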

\begin{proof}
Recall that for any function $h$, at the origin $o\in N_{n,2}$, we have $\nabla h=\hat{\nabla }h$. It follow, for radial function $f\in C_c^{\infty}(N_{n,2})$,
\begin{align*}
|\nabla P_tf|(o)&=|P_t\hat{\nabla} f|(o)\\
&\le P_t(|\hat{\nabla} f|)(o) \\
&= P_t(|\nabla f|)(o),
\end{align*}
where the last equality follows from the Lemma \ref{equivgradient}.
Thus  $$
|\nabla P_tf|(g) \le P_t(|\nabla f|)(g)
$$
holds for any  $g\in N_{n,2}$ by translation invariance.
\end{proof}
\begin{remarks}
The above Proposition can be compared with the Proposition 2.28 in \cite{thesis}.
\end{remarks}

 As a consequence, we have the following
\begin{theorem}\label{positive} For any compactly supported smooth,
radial function $f$, for any $t\ge0$, $g\in N_{n,2}$,
\bdes
\item{(i) Positive curvature property.} $\Gamma_2(f,f)\ge0$.
\item{(ii) LSI inequality.}  $P_t(f\log f)(g)-P_t(f)\log P_t(f)(g)\le
tP_t\left(\frac{\Gamma(f,f)}{f}\right)(g).$
\item{(iii) Isoperimetric inequality.}  $ P_t(|f-P_t(f)(g)|)(g)\le 4\sqrt{t}P_t(\Gamma(f)^{\frac12})(g).$
\ndes \end{theorem}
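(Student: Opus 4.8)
The plan is to read Proposition \ref{L1} as the sub-commutation inequality $\Gamma(P_tf)^{1/2}\le P_t(\Gamma(f)^{1/2})$, which is exactly condition (iii) of Proposition A with $\rho=0$, and then to feed this into the standard Bakry--\'Emery--Ledoux semigroup interpolations to extract (i), (ii) and (iii). The one structural fact that legitimizes this in the present hypoelliptic setting, where no global $CD(\rho,\infty)$ holds, is that the class of radial functions is stable under the semigroup: since each $\theta_{ij}$ lies in $\mathcal{T}$ and hence commutes with $\Delta$, it commutes with $P_t=e^{t\Delta}$, so $\theta_{ij}P_tf=P_t\theta_{ij}f=0$ whenever $f$ is radial. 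Thus every intermediate function $P_{t-s}f$ below is again radial, and so is $\log P_{t-s}f$ (because $\theta_{ij}$ is a first-order derivation, $\theta_{ij}\log g=\theta_{ij}(g)/g=0$). This is the mechanism by which ``$CD(0,\infty)$ along radial directions'' propagates through the interpolations.

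For (i), I would first square Proposition \ref{L1} and use that $P_t$ is Markov: $\Gamma(P_tf)\le(P_t\Gamma(f)^{1/2})^2\le P_t(\Gamma(f))$ by Jensen. Setting $\phi(t):=P_t(\Gamma(f))-\Gamma(P_tf)$, one has $\phi\ge0$ and $\phi(0)=0$ pointwise, while a direct differentiation gives $\phi'(0)=\Delta\Gamma(f)-2\Gamma(f,\Delta f)=2\Gamma_2(f,f)$. Nonnegativity of the right derivative of $t\mapsto\phi(t)(g)$ at $t=0$ then forces $\Gamma_2(f,f)(g)\ge0$ at every point $g$.

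For (ii), I would use the interpolation $\Lambda(s):=P_s\big(P_{t-s}f\log P_{t-s}f\big)$ on $[0,t]$ (for positive radial $f$, the general case by approximation), whose endpoints are $\Lambda(t)=P_t(f\log f)$ and $\Lambda(0)=P_tf\log P_tf$. Since $\Delta$ is a diffusion, the chain rule gives $\Lambda'(s)=P_s(\Gamma(P_{t-s}f)/P_{t-s}f)$. The crux is the pointwise bound $\Gamma(P_uf)/P_uf\le P_u(\Gamma(f)/f)$, which follows from monotonicity in $s$ of $\gamma(s)=P_s(\Gamma(P_{u-s}f)/P_{u-s}f)$: a computation together with the universal identity $g\,\Gamma_2(\log g)=\frac1g\big(\Gamma_2(g)-\Gamma(g,\Gamma(g))/g+\Gamma(g)^2/g^2\big)$ yields $\gamma'(s)=2P_s\big(P_{u-s}f\cdot\Gamma_2(\log P_{u-s}f)\big)$, which is nonnegative because $\log P_{u-s}f$ is radial and (i) applies. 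Feeding $\Gamma(P_{t-s}f)/P_{t-s}f\le P_{t-s}(\Gamma(f)/f)$ into $\Lambda'$ and integrating over $[0,t]$ produces the factor $t$, giving (ii).

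For (iii), the plan is to run Ledoux's semigroup approach to isoperimetric/$L^1$ inequalities, as implemented for the Heisenberg group in \cite{BBBC}: fix $x$, set $a=P_tf(x)$, and interpolate $P_s(|P_{t-s}f-a|)(x)$, regularizing $|\cdot|$ by $\sqrt{(\cdot)^2+\varepsilon}$ to control its nonsmoothness, then combine with the gradient bound and the semigroup identity $P_sP_{t-s}=P_t$. Here it is essential to keep the genuinely $L^1$ bound $\Gamma(P_{t-s}f)^{1/2}\le P_{t-s}(\Gamma(f)^{1/2})$ rather than its squared form, since the target involves $P_t(\Gamma(f)^{1/2})$ and not $(P_t\Gamma(f))^{1/2}$: any Cauchy--Schwarz step would collapse the estimate to the weaker $L^2$ (local Poincar\'e) version. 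I expect this to be the main obstacle, since the interpolation differentiates $|P_{t-s}f-a|$, whose second-order term is a perimeter-type object concentrated on $\{P_{t-s}f=a\}$, and the delicate point is to bound it above by the gradient while both preserving the $L^1$ structure and producing the $\sqrt t$ scaling with the explicit constant $4$. The remaining verifications---differentiation under $P_s$, integrability (legitimate since $f\in C_c^\infty$), and radiality of all intermediate functions---are routine given the commutation of the $\theta_{ij}$ with $P_t$.
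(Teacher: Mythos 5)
Your overall strategy coincides with the paper's: the paper disposes of the theorem in one line, deducing (i) from Proposition A via Proposition \ref{L1}, and quoting Theorems 6.1 and 6.2 of \cite{BBBC} for (ii) and (iii). Your reconstructions of the arguments behind those citations are correct for (i) and (ii): the differentiation of $\phi(t)=P_t\Gamma(f)-\Gamma(P_tf)$ at $t=0$ is exactly the implication of Proposition A that is needed (and you rightly observe that it only uses the inequality for the single radial $f$ at hand, together with stability of radiality under $P_t$); the entropy interpolation for (ii) is the standard one, the only loose end being that you invoke (i) for $\log P_{u-s}f$, which is radial but not compactly supported, so a word on localizing with radial cutoffs (or on extending Proposition \ref{L1} beyond $C_c^\infty$) is owed.

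The genuine gap is in (iii), and you have correctly located it without closing it. The interpolation you sketch, $\psi(s)=P_s\bigl(\varphi_\varepsilon(P_{t-s}f-a)\bigr)(x)$ with $\varphi_\varepsilon(u)=\sqrt{u^2+\varepsilon}$, gives $\psi'(s)=P_s\bigl(\varphi_\varepsilon''(P_{t-s}f-a)\,\Gamma(P_{t-s}f)\bigr)(x)$; bounding $\varphi_\varepsilon''\le\varepsilon^{-1/2}$ and optimizing in $\varepsilon$ inevitably produces $2\sqrt{t\,P_t(\Gamma(f))(x)}$, i.e.\ precisely the $L^2$ (local Poincar\'e) version you say you want to avoid, and there is no evident way to convert the second-order ``perimeter'' term into $P_t(\Gamma(f)^{1/2})$ along this route. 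The argument that actually works (and is the one behind Theorem 6.2 of \cite{BBBC}) is dual: write $P_t\bigl(|f-P_tf(x)|\bigr)(x)=\sup_{|g|\le1}P_t\bigl(g\,(f-P_tf(x))\bigr)(x)$ and use the covariance representation
\begin{equation*}
P_t\bigl(g\,(f-P_tf(x))\bigr)(x)=\int_0^tP_s\bigl(\Gamma(P_{t-s}f,P_{t-s}g)\bigr)(x)\,ds .
\end{equation*}
One then applies Cauchy--Schwarz inside $\Gamma$, the $L^1$ commutation $\Gamma(P_{t-s}f)^{1/2}\le P_{t-s}(\Gamma(f)^{1/2})$ on the radial factor $f$, and on the dual factor $g$ the reverse Poincar\'e bound $\Gamma(P_ug)\le C/u$ for $\|g\|_\infty\le1$, which follows from the Driver--Melcher inequality $\Gamma(P_uh)\le CP_u\Gamma(h)$ valid for \emph{all} functions (this is essential, since the dual test function $g$ has no reason to be radial). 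Integrating $\int_0^t(t-s)^{-1/2}\,ds=2\sqrt{t}$ yields the stated bound. Without this duality step your plan for (iii) does not go through as written.
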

\begin{proof}
By Proposition \ref{L1}, $(i)$ follows from  Proposition A,  $(ii)$ and $(iii)$  follow from  Theorem 6.1 and Theorem 6.2 in \cite{BBBC}.
\end{proof}
\section{Gradient bounds for the heat kernels}
As done in \cite{BBBQ}, we have the following Li-Yau type inequality
holds.

 \begin{proposition}\label{prop-LY} There exist positive constants
$C_1,C_2,C_3$ (dependent on $n$) such that for any positive function $f$, if $u=\log
P_tf$, we have
$$
\part_t u\ge C_1\Gamma(u)+C_2t\sum_{1\le i<j\le n}|Y_{ij}u|^2-\frac{C_3}{t}.
$$
 \end{proposition}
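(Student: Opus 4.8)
The plan is to deduce the stated Li--Yau inequality from a generalized curvature--dimension inequality, following the scheme of \cite{BBBQ}. Writing $u=\log P_tf$, a direct computation gives $\partial_t u=\Delta u+\Gamma(u)$, so the assertion is equivalent to a pointwise bound relating $\Delta u$, $\Gamma(u)$ and the vertical energy $\sum_{i<j}|Y_{ij}u|^2$. Accordingly I introduce the vertical carr\'e du champ $\Gamma^Z(f):=\sum_{i<j}(Y_{ij}f)^2$ together with the iterated operators $\Gamma_2$ (already defined in the text) and $\Gamma_2^Z(f):=\frac12(\Delta\Gamma^Z(f)-2\Gamma^Z(f,\Delta f))$, and I work with positive $f\in C_c^\infty(N_{n,2})$.

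The main computational step is to establish, for every $\nu>0$, the inequality
\begin{equation*}
\Gamma_2(f)+\nu\,\Gamma_2^Z(f)\ge \frac1n(\Delta f)^2-\frac{n-1}{\nu}\,\Gamma(f).
\end{equation*}
Here I would use crucially the bracket relation $[X_k,Y_{ij}]=0$: it forces $\Delta Y_{ij}f=Y_{ij}\Delta f$, whence $\Gamma_2^Z(f)=\sum_{i<j}\sum_k(X_kY_{ij}f)^2\ge0$ is an exact sum of squares with no residual commutator terms. For $\Gamma_2$ itself I keep the diagonal part $\sum_i(X_i^2f)^2\ge\frac1n(\Delta f)^2$ (Cauchy--Schwarz) and bound the offending mixed term $2\sum_{i<j}(X_jf\,X_iY_{ij}f-X_if\,X_jY_{ij}f)$ by Young's inequality, splitting each product $|X_jf\,X_iY_{ij}f|$ as $\frac1{2\nu}|X_jf|^2+\frac{\nu}2|X_iY_{ij}f|^2$. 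Summing, the first halves assemble into $\frac{n-1}{\nu}\Gamma(f)$ (each index lies in $n-1$ pairs) and the second halves are dominated by $\nu\,\Gamma_2^Z(f)$, which yields the displayed inequality. This is exactly the generalized condition of type $CD(0,0,n-1,n)$ for $N_{n,2}$, with effective dimension $d=n$ and defect $\kappa=n-1$.

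With this inequality in hand, I would run the now-standard interpolation argument. Fixing $t>0$, set $g_s=P_{t-s}f$, $u_s=\log g_s$ and consider $\phi_1(s)=P_s(g_s\Gamma(u_s))$ and $\phi_2(s)=P_s(g_s\Gamma^Z(u_s))$; the standard semigroup computation (as in \cite{BBBQ}) gives $\phi_1'(s)=2P_s(g_s\Gamma_2(u_s))$ and $\phi_2'(s)=2P_s(g_s\Gamma_2^Z(u_s))$. Forming $\psi(s)=\alpha(s)\phi_1(s)+\alpha(s)\nu(s)\phi_2(s)$ with time-dependent weights $\alpha,\nu\ge0$ and applying the inequality above to $\alpha\Gamma_2+\alpha\nu\Gamma_2^Z$, one obtains a differential inequality for $\psi$ in which the term $\frac1n(\Delta u_s)^2$ controls the remainder after a final Cauchy--Schwarz against $\int_0^t\alpha$. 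Integrating over $s\in[0,t]$ and reading off the boundary values $\phi_1(0)=P_tf\cdot\Gamma(u)$ and $\phi_2(0)=P_tf\cdot\Gamma^Z(u)$, then dividing by $P_tf$, produces the pointwise bound $\partial_t u\ge C_1\Gamma(u)+C_2t\,\Gamma^Z(u)-C_3/t$ with constants depending only on $n$.

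The main obstacle is that, unlike the positively curved three-dimensional models, here the curvature parameters vanish ($\rho_1=\rho_2=0$): the curvature--dimension inequality carries no positive term, so the desired positive coefficient $C_2t$ in front of $\Gamma^Z(u)$ cannot come for free and must instead be manufactured through the choice of the profiles $\alpha(s)$ and $\nu(s)$. The delicate bookkeeping is to choose these so that the $\alpha'\phi_1$ and $(\alpha\nu)'\phi_2$ contributions close up with the correct powers of $t$ --- in particular $t^{1}$ on the vertical term and $t^{-1}$ on the remainder --- exactly as in \cite{BBBQ} for the Heisenberg case; the general $n$ changes only the explicit constants, not the structure. A secondary, purely technical point is justifying the differentiation of $\phi_1,\phi_2$ under the semigroup together with the integrability of the various terms, which is harmless here since $f$ is smooth and compactly supported.
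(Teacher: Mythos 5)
Your overall strategy is exactly the one the paper intends: the paper gives no argument of its own beyond deferring to \cite{BBBQ} and \cite{BG}, and the generalized curvature--dimension route you describe is the correct reading of that reference. However, there is a genuine gap at the central step. The inequality you establish, $\Gamma_2(f)+\nu\Gamma_2^Z(f)\ge \frac1n(\Delta f)^2-\frac{n-1}{\nu}\Gamma(f)$, has $\rho_2=0$ (no $\Gamma^Z(f)$ term on the right-hand side), and with $\rho_2=0$ the Baudoin--Garofalo interpolation cannot produce the term $C_2t\sum_{i<j}|Y_{ij}u|^2$. Concretely, in the scheme $\psi(s)=a(s)\phi_1(s)+b(s)\phi_2(s)$ the coefficient of $P_s(g_s\Gamma^Z(u_s))$ in $\psi'(s)$ is $b'(s)+2a(s)\rho_2$; to discard the uncontrollable boundary contribution at $s=t$ one needs $b(t)=0$, while to retain a positive vertical term at $s=0$ one needs $b(0)>0$, hence $b'<0$ somewhere, and nonnegativity of $b'+2a\rho_2$ then forces $\rho_2>0$. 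No choice of the profiles $\alpha(s),\nu(s)$ circumvents this; your closing paragraph correctly identifies the obstacle, but the proposed escape does not exist.

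The repair is small and standard: do not discard the off-diagonal Hessian entries when extracting $\frac1n(\Delta f)^2$. Since
$$\sum_{i\neq j}(X_iX_jf)^2=\sum_{i<j}\bigl((X_iX_jf)^2+(X_jX_if)^2\bigr)\ \ge\ \frac12\sum_{i<j}(X_iX_jf-X_jX_if)^2=\frac12\,\Gamma^Z(f),$$
your computation upgrades to
$$\Gamma_2(f)+\nu\,\Gamma_2^Z(f)\ \ge\ \frac1n(\Delta f)^2+\frac12\,\Gamma^Z(f)-\frac{n-1}{\nu}\,\Gamma(f),$$
that is, the generalized condition $CD(0,\tfrac12,n-1,n)$ with $\rho_2=\tfrac12>0$. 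With this the machinery of \cite{BG}, with the concrete profiles of \cite{BBBQ}, applies verbatim and yields the stated Li--Yau inequality with constants depending only on $n$. The remaining ingredients of your write-up --- the identity $\Gamma_2^Z(f)=\sum_{i<j}\sum_k(X_kY_{ij}f)^2$ from $[X_k,Y_{ij}]=0$, the Young splitting producing the $\frac{n-1}{\nu}\Gamma(f)$ term, and the boundary values of $\phi_1,\phi_2$ --- are correct.
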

\begin{proof} Since the proof  closely follows \cite{BBBQ}, we skip the proof.  We would like recommend   the readers'  to \cite{BBBQ} and the interesting paper \cite{BG}.
\end{proof}
As a consequence, we have the following Harnack inequality: There
exist positive constants $A_1$, $A_2$(dependent on $n$, see \cite{BG} for exact expression for $A_1$, $A_2$),  for $t_2>t_1>0$, and
$g_1,g_2\in N_{n,2}$, \bequ\label{nBM-harnack}
\frac{p_{t_1}(g_1)}{p_{t_2}(g_2)}\le
\left(\frac{t_2}{t_1}\right)^{A_1}e^{A_2\frac{d^2(g_1,g_2)}{t_2-t_1}}.
\nequ

 Let us state the first result of the gradient estimate for the heat kernel.
\begin{proposition}\label{nBM-gradient10} There exists a constant $C>0$(dependent on $n$) such that
for $t>0$, $g=(\vec{x},\vec{y})\in N_{n,2}$,
$$
\sqrt{\Gamma(\log p_t)(g)}\le C
\left(\frac{d(g)}{t}+\frac{1}{\sqrt{t}}\right),
$$
where $p_t(g)$ denotes the density of $P_t$ at $o$ and $d(g)$
denotes the Carnot-Carath\'eodory distance between $o$ and $g$.
\end{proposition}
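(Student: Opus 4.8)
The plan is to reduce everything to the single time $t=1$ and then to split according to the Carnot--Carath\'eodory distance. From the scaling relation (\ref{scale}) one gets $\Gamma(\log p_t)(\vec x,\vec y)=\frac1t\Gamma(\log p_1)(\vec x/\sqrt t,\vec y/t)$ together with $d(\vec x/\sqrt t,\vec y/t)=d(\vec x,\vec y)/\sqrt t$, so the claimed estimate is scale invariant and is equivalent to
\[
\sqrt{\Gamma(\log p_1)(g)}\le C(1+d(g)),\qquad g\in N_{n,2}.
\]
In the near regime $d(g)\le1$ this is immediate: $\{d\le1\}$ is a compact neighbourhood of $o$, the hypoelliptic kernel $p_1$ is smooth and strictly positive there, so $\Gamma(\log p_1)=|\nabla\log p_1|^2$ is bounded on it; after undoing the scaling this produces exactly the $1/\sqrt t$ contribution.

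The far regime $d(g)>1$ is the substance. Here I would feed the heat kernel into Proposition \ref{prop-LY}: discarding the nonnegative vertical term yields $C_1\Gamma(\log p_t)\le\part_t\log p_t+C_3/t$, so the whole problem is reduced to an \emph{upper} bound $\part_t\log p_t(g)\le C\big(d^2(g)/t^2+1/t\big)$; granting it, substitution and a square root give $\sqrt{\Gamma(\log p_t)(g)}\le C(d(g)/t+1/\sqrt t)$, as desired. To produce the $\part_t$ bound I would use two-sided Gaussian estimates: Harnack (\ref{nBM-harnack}) applied with $g_1=o$, $g_2=g$ and times $t/2,t$ gives the lower bound $p_t(g)\ge c\,t^{-n^2/2}e^{-Cd^2(g)/t}$, while \cite{VSC} supplies the matching Gaussian upper bound on nilpotent groups. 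Integrating the Li--Yau inequality over $[t/2,t]$ and comparing $\log p_{t/2}$ with $\log p_t$ through these bounds controls $\int_{t/2}^t\Gamma(\log p_s)(g)\,ds$ by $C(1+d^2(g)/t)$; the passage from this averaged statement to the pointwise value at the prescribed time is then carried out following the combined strategy of Baudoin--Bonnefont \cite{BB} and H.\,Q. Li \cite{HLi1}, the role of Proposition \ref{L1} being to transport the $L^1$ gradient control of the (radial) heat kernel through the semigroup.

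The hard part will be exactly this last step, the pointwise upper bound on $\part_t\log p_t$ in the far regime. Harnack (\ref{nBM-harnack}) with $g_1=g_2$ furnishes only the \emph{lower} bound $\part_t\log p_t\ge-A_1/t$, and because the two-sided Gaussian bounds carry non-matching exponential constants they do not by themselves pin down $\part_t\log p_t$ from above at a fixed time. A convenient reformulation is the dilation identity $\part_t\log p_t=-\frac1{2t}\big(Z\log p_t+n^2\big)$ with $Z=\sum_i x_i\part_i+2\sum_{i<j}y_{ij}\hat{\part}_{ij}$, which recasts the task as a sharp \emph{lower} bound on the dilation derivative $Z\log p_t$. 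Decomposing $Z\log p_t$ into horizontal and vertical parts and absorbing the vertical contribution against the term $C_2 t\sum_{i<j}|Y_{ij}\log p_t|^2$ of Proposition \ref{prop-LY} via Cauchy--Schwarz and Young inequalities yields a self-improving inequality for $\Gamma(\log p_t)$; I expect, however, that a naive bootstrap closes only when $d(g)\lesssim\sqrt t$, so the genuinely large-distance range is precisely where the finer heat-kernel information encoded in \cite{BB,HLi1} is indispensable.
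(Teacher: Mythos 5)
Your proposal contains a genuine gap, and you have in fact located it yourself: the entire far-regime case rests on a pointwise upper bound $\part_t \log p_t(g)\le C\big(d^2(g)/t^2+1/t\big)$ that you never establish. The tools you invoke cannot produce it. The Li--Yau inequality of Proposition \ref{prop-LY} gives only a \emph{lower} bound on $\part_t\log p_t$; integrating it over $[t/2,t]$ and comparing $\log p_{t/2}$ with $\log p_t$ via two-sided Gaussian bounds controls only the time-\emph{average} $\int_{t/2}^t\Gamma(\log p_s)(g)\,ds$, and since the Gaussian upper and lower bounds carry non-matching exponential constants (as you note), there is no way to pass from this averaged control to the pointwise value at a prescribed time without an additional monotonicity input. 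Your dilation-identity reformulation and the proposed bootstrap, by your own admission, close only for $d(g)\lesssim\sqrt t$, which is exactly the regime already handled by compactness. So the substance of the proposition is left to an appeal to \cite{BB,HLi1} that is not carried out.

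The missing idea is precisely the one the paper is built around. The actual proof interpolates along the semigroup: setting $\Phi(s)=P_s\big(p_{t-s}\log p_{t-s}\big)$ for $0<s<t$, one computes $\Phi'(s)=P_s\big(p_{t-s}\Gamma(\log p_{t-s})\big)$ and $\Phi''(s)=2P_s\big(p_{t-s}\Gamma_2(\log p_{t-s})\big)$. Since the heat kernel is radial, Theorem \ref{positive} gives $\Phi''\ge0$, hence $\Phi'$ is non-decreasing, and
$$
p_t\,\Gamma(\log p_t)=\Phi'(0)\le\frac{2}{t}\int_0^{t/2}\Phi'(s)\,ds
=\frac{2}{t}\Big(P_{t/2}\big(p_{t/2}\log p_{t/2}\big)-p_t\log p_t\Big).
$$
It is this convexity — a direct consequence of the positive curvature property along radial directions — that converts averaged information into the pointwise bound at $s=0$; the right-hand side is then controlled by the Harnack inequality (\ref{nBM-harnack}) together with $p_{t/2}(g)\le p_{t/2}(o)$, which is where the $d(g)/t$ and $1/\sqrt t$ terms come from. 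Your write-up mentions Proposition \ref{L1} only as a vague "transport" device and never uses Theorem \ref{positive} in a load-bearing way, yet it is the crux: without it (or an equivalent monotonicity statement) the reduction to an upper bound on $\part_t\log p_t$ leaves the proposition unproved. Your scaling reduction and the treatment of the regime $d(g)\le1$ by compactness are correct, but they only account for the easy $1/\sqrt t$ contribution.
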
 \begin{proof}
Following \cite{BB} as in \cite{Qian2}, for $0<s<t$, let
$\Phi(s)=P_s\big(p_{t-s}\log p_{t-s}\big)$, we have
$$ \Phi'(s)=P_s\big(p_{t-s}\Gamma(\log p_{t-s})\big),\
\Phi''(s)=2P_s\big(p_{t-s}\Gamma_2(\log p_{t-s})\big).
$$
By Theorem \ref{positive}, $\Phi''$ is positive, whence
$\Phi'$ is non-desceasing, thus $$\int_0^{\frac{t}2}\Phi'(s)ds\ge
\frac{t}2\Phi'(0).
$$
That is
$$
p_t\Gamma(\log p_t)\le \frac{2}{t}\big(P_{t/2}(p_{t/2}\log
p_{t/2})-p_t\log p_t\big).
$$
The right hand side can be bounded by applying the above Harnack
inequality (\ref{nBM-harnack}) and the basic fact $p_{t/2}(g)\le
p_{t/2}(o)$, for all $g\in N_{n,2}$. We have
$$
\sqrt{\Gamma(\log p_t)(g)}\le
C\left(\frac{d(g)}{t}+\frac{1}{\sqrt{t}}\right).
$$
\end{proof}

\begin{proposition}\label{smallvalue}
For $g=(\vec{x},\vec{y})\in N_{n,2}$ satisfying $d(g)\le 1$, there exists a positive constant $C$ (dependent on $n$), such that
$$
\sqrt{\Gamma( p)(g)}\le Cd(g).
$$
\end{proposition}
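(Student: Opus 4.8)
The plan is to exploit the fact, already used in the proof of Proposition \ref{nBM-gradient10}, that the time-one heat kernel $p:=p_1$ attains its global maximum at the origin, and to turn the resulting vanishing of $\Gamma(p)$ at $o$ into a bound that is \emph{linear} in $d$ by a second-order Taylor estimate. First I would record that $p(g)\le p(o)$ for every $g\in N_{n,2}$. Since $p$ is smooth (hypoellipticity) and $o$ is an interior maximum, every Euclidean partial derivative of $p$ vanishes at $o$; in particular $X_ip(o)=0$ for $1\le i\le n$, so that $\Gamma(p)(o)=\sum_{i=1}^n (X_ip)^2(o)=0$. Because $\Gamma(p)=\sum_i (X_ip)^2\ge0$ everywhere, the origin is in fact a global minimum of the smooth nonnegative function $\Gamma(p)$, hence its Euclidean gradient also vanishes at $o$.

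Next I would use that a smooth nonnegative function vanishing at an interior minimum vanishes to second order there. Since $N_{n,2}$ is complete, the ball $\{d\le 1\}$ is compact; fix a Euclidean ball $B$ of radius $R$ containing it and set $M:=\sup_{B}\|\mathrm{Hess}\,\Gamma(p)\|$, which is finite because $\Gamma(p)$ is smooth and $B$ is compact. As $\Gamma(p)(o)=0$ and the Euclidean gradient of $\Gamma(p)$ vanishes at the minimum $o$, Taylor's formula along the Euclidean segment from $o$ to $g$ gives $\Gamma(p)(g)\le \tfrac12 M\,|g|^2$ for every $g\in B$, where $|g|$ denotes the Euclidean norm; in particular this holds on the whole of $\{d\le 1\}$.

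Finally I would compare the Euclidean norm with the Carnot--Carath\'eodory distance on the unit ball. Using the dilation relation $d(\gamma\vec{x},\gamma^2\vec{y})=\gamma\, d(\vec{x},\vec{y})$ together with the homogeneity $(\vec{x},\vec{y})\mapsto(\gamma\vec{x},\gamma^2\vec{y})$, one gets near $o$ the estimates $|\vec{x}|\le C_1 d(g)$ and $|\vec{y}|\le C_1 d(g)^2\le C_1 d(g)$ for $d(g)\le 1$, hence $|g|\le C' d(g)$ there; away from $o$ the same inequality follows by compactness, since $d$ is bounded below on $\{r_0\le d\le 1\}$ while $|g|$ is bounded. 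Combining with the previous step yields $\Gamma(p)(g)\le \tfrac12 M\,C'^2\, d(g)^2$, and taking square roots gives $\sqrt{\Gamma(p)(g)}\le C\, d(g)$ with $C=C'\sqrt{M/2}$, which depends only on $n$ through the group structure.

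The hard part is not the Taylor step but the geometric comparison $|g|\le C'd(g)$ on $\{d\le 1\}$: one must be careful with the anisotropy of the sub-Riemannian distance, since the vertical variables $\vec{y}$ scale quadratically. The point is that this anisotropy works \emph{in our favour} on the unit ball, because $|\vec{y}|\le d(g)^2\le d(g)$ once $d(g)\le 1$; this is exactly what the ball--box description (equivalently, the exact dilation invariance of $d$) provides, and it is what prevents the vertical directions from spoiling the linear-in-$d$ bound. An alternative that avoids the distance comparison is to integrate $\sqrt{\Gamma(\Gamma(p)^{1/2})}$ along a length-$d(g)$ sub-unit geodesic from $o$ to $g$, but that route must confront the non-smoothness of $\Gamma(p)^{1/2}$ at the zeros of $\Gamma(p)$ via a Kato-type inequality, so I would prefer the Taylor argument on $\Gamma(p)$ itself.
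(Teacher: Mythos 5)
Your proof is correct, but it takes a genuinely different route from the paper. The paper works directly from Gaveau's explicit integral representation (\ref{express0}) of $p=p_1$: it bounds $|\partial_i p|$ by $C|x_i|$ and $|\hat\partial_{kl}p|$ by a constant via the integrals $W_1,W_2$ (whose finiteness has to be checked), and then concludes from $\sum_i|X_ip|\le C|\vec{x}|(W_1+W_2)$ together with $|\vec{x}|\le d(g)$. You instead use only soft structural facts: $p$ is smooth by hypoellipticity and attains its global maximum at $o$ (the same ``basic fact'' $p_t(g)\le p_t(o)$ the paper already invokes in Proposition \ref{nBM-gradient10}), hence $X_ip(o)=\partial_ip(o)=0$ and $\Gamma(p)$ vanishes to second order at its interior minimum $o$, giving $\Gamma(p)(g)\le\tfrac12M|g|^2$ on a compact convex neighbourhood; the dilation/ball--box comparison $|\vec x|\le Cd(g)$, $|\vec y|\le Cd(g)^2\le Cd(g)$ on $\{d\le1\}$ then converts this into the stated bound. (A marginally cleaner variant of your Taylor step: each $X_ip$ is smooth and vanishes at $o$, so $|X_ip(g)|\le L|g|$ directly, avoiding the Hessian of $\Gamma(p)$.) What each approach buys: yours is formula-free and would transfer to any Carnot group, or indeed any unimodular Lie group with a symmetric hypoelliptic heat kernel, where no explicit expression is available; the paper's computation is more quantitative, with constants expressible through $W_1,W_2$, and yields the slightly sharper conclusion $\sqrt{\Gamma(p)}(g)\le C|\vec{x}|$, which vanishes on the center $\{\vec x=0\}$ and is closer in spirit to the precise Heisenberg-type estimates of H.~Q.~Li. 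The only points you should make explicit are the convexity of the comparison set $B$ (so the Euclidean segment from $o$ to $g$ stays inside it) and the symmetry $p_t(g^{-1})=p_t(g)$ underlying the maximality of $p_t$ at $o$; both are standard here.
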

\begin{proof}
Recall that we have the precise expression of  the heat kernel, see (\ref{express0}). To estimate $\Gamma( p)(g)$, denote the orthogonal matrix $\Omega=(\omega_{ij})_{1\le i,j\le n}$, which appear in the $\varphi_j(A,\vec{x})$ in (\ref{express0}), we have


$$
|\varphi_j(A,\vec{x})|\le \frac{P_{2j-1}}{2}(\sinh\frac{P_{2j-1}}{2})^{-1}
$$
$$
|\partial_{i}\varphi_j(A,\vec{x})|\le (\frac{P_{2j-1}}{2})^2(\sinh\frac{P_{2j-1}}{2})^{-1}\coth\frac{P_{2j-1}}{2}(\omega_{i,2j-1}^2+\omega_{i,2j}^2)|x_i|.
$$

It yields, for $1\le i\le n$,
\begin{align}
\partial_{i}p(\vec{x},\vec{y})&\le (2\pi)^{-\frac{n(n+2)}{2}}\int_{\rr^{\frac{n(n-1)}2}}
\prod_{j=1}^{[\frac{n}{2}]}\frac{P_{2j-1}}{2}(\sinh\frac{P_{2j-1}}{2})^{-1}
\sum_{j=1}^{[\frac{n}2]}\frac{P_{2j-1}}{2}\coth\frac{P_{2j-1}}{2}(\omega_{i,2j-1}^2+\omega_{i,2j}^2)|x_i|\prod_{k<l}d\alpha_{kl}\nonumber\\
&\le (2\pi)^{-\frac{n(n+2)}{2}}|x_i|\int_{\rr^{\frac{n(n-1)}2}} \frac{P_{1}}{2}\coth\frac{P_{2[\frac{n}2]-1}}{2}
\prod_{j=1}^{[\frac{n}{2}]}\frac{P_{2j-1}}{2}(\sinh\frac{P_{2j-1}}{2})^{-1}
\prod_{k<l}d\alpha_{kl}\nonumber
\end{align}
where we  use the fact that for $1\le i\le n$, $\sum_{1\le j\le 2[\frac{n}{2}]-1}\omega_{ij}^2\le 1$, which is the consequence of the fact that $\Omega $ is orthogonal matrix.
Also we have for $k'<l'$,
$$
\hat{\partial}_{k'l'}p(\vec{x},\vec{y})\le (2\pi)^{-\frac{n(n+2)}2}\int_{\rr^{\frac{n(n-1)}2}}|\alpha_{k'l'}|\prod_{j=1}^{[\frac{n}{2}]}\frac{P_{2j-1}}{2}(\sinh\frac{P_{2j-1}}{2})^{-1}\prod_{k<l}d\alpha_{kl}.
$$
It follows,
\begin{equation}\label{gradient1}
\sum_{i=1}^n |X_ip|\le (2\pi)^{-\frac{n(n+2)}{2}}|\vec{x}|(W_1+W_2),
\end{equation}
where
$$W_1=\int_{\rr^{\frac{n(n-1)}2}} \frac{P_{1}}{2}\coth\frac{P_{2[\frac{n}2]-1}}{2}
\prod_{j=1}^{[\frac{n}{2}]}\frac{P_{2j-1}}{2}(\sinh\frac{P_{2j-1}}{2})^{-1}
\prod_{k<l}d\alpha_{kl},
$$
and
$$
W_2=\int_{\rr^{\frac{n(n-1)}2}}\sum_{k<l}|\alpha_{kl'}|\prod_{j=1}^{[\frac{n}{2}]}\frac{P_{2j-1}}{2}(\sinh\frac{P_{2j-1}}{2})^{-1}\prod_{k<l}d\alpha_{kl}
$$
with the restriction $$
\alpha:=\sum_{k<l}\alpha^2_{kl}=\sum_{j=1}^{[\frac{n}2]}P_{2j-1}^2,
$$
which follows from the fact that both sides are the  half of the trace of $-A^2=A^tA$.
 Note that for $1\le j\le [\frac{n}2] $, $P_{2j-1}\le \sqrt{\alpha}$ and $(\sinh x)^{-1}\le 4e^{-x}$ for $|x|\ge\frac12$, we have for positive constants $C_1,C_2$,
$$
W_1\le C_1 vol (B_1(0))+C_2\int_{B_1^c(0)}\alpha^{\frac{[\frac{n}2]+1}2}e^{-\sqrt{\alpha}}\prod_{k<l}d\alpha_{kl}
$$
which is obviously bounded. Similarly, we have $W_2$ is bounded.

 Combining with (\ref{gradient1}) and  the fact $|x|\le d(g)\le 1$ (see \cite{VSC}), we have
$$
\sqrt{\Gamma(p)(g)}\le C_1|x|(W_1+W_2)\le C_2|x|\le Cd(g).
$$

\end{proof}

Here is an analogue result of Theorem B in the case $N_{n,2}$.

\begin{proposition}\label{nBM-gradient1} There exists a constant $C>0$ (dependent on $n$) such that
for $t>0$, $g=(\vec{x},\vec{y})\in N_{n,2}$,
$$
\sqrt{\Gamma(\log p_t)(g)}\le
\frac{C d(g)}{t}.
$$
\end{proposition}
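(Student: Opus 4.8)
The plan is to deduce the sharp estimate from the two preceding propositions, after first collapsing every time $t$ to the single time $t=1$ by means of the dilation structure. Under the dilation $\delta_\lambda(\vec{x},\vec{y})=(\lambda\vec{x},\lambda^2\vec{y})$ one has $X_i(f\circ\delta_\lambda)=\lambda\,(X_if)\circ\delta_\lambda$, hence $\Gamma(f\circ\delta_\lambda)=\lambda^2\,\Gamma(f)\circ\delta_\lambda$. Combining this with the heat kernel scaling (\ref{scale}), which gives $\log p_t(g)=-\frac{n^2}{2}\log t+\log p_1(\delta_{1/\sqrt t}\,g)$, and with the homogeneity $d(\delta_\lambda g)=\lambda\,d(g)$ of the Carnot-Carath\'eodory distance, I obtain the two identities
$$
\Gamma(\log p_t)(g)=\frac1t\,\Gamma(\log p_1)\bigl(\delta_{1/\sqrt t}\,g\bigr),\qquad d\bigl(\delta_{1/\sqrt t}\,g\bigr)=\frac{d(g)}{\sqrt t}.
$$
Therefore it suffices to prove the time-one estimate $\sqrt{\Gamma(\log p_1)(g)}\le C\,d(g)$ for all $g\in N_{n,2}$: writing $h=\delta_{1/\sqrt t}\,g$ and feeding it back into the two identities yields $\sqrt{\Gamma(\log p_t)(g)}=\tfrac1{\sqrt t}\sqrt{\Gamma(\log p_1)(h)}\le\tfrac{C}{\sqrt t}\,d(h)=\tfrac{C\,d(g)}{t}$, which is exactly the claim.

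Next I would establish the time-one estimate by splitting according to whether $d(g)$ is large or small. For the far region $d(g)\ge1$, Proposition \ref{nBM-gradient10} at $t=1$ already gives $\sqrt{\Gamma(\log p_1)(g)}\le C(d(g)+1)\le 2C\,d(g)$, so nothing further is needed there. For the near region $d(g)\le1$, I would pass from $p_1$ to $\log p_1$ using $\Gamma(\log p_1)=\Gamma(p_1)/p_1^2$, so that Proposition \ref{smallvalue} yields
$$
\sqrt{\Gamma(\log p_1)(g)}=\frac{\sqrt{\Gamma(p_1)(g)}}{p_1(g)}\le \frac{C\,d(g)}{p_1(g)}.
$$
It then remains to bound $p_1$ from below on the closed unit ball $\{d(g)\le1\}$.

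The main point, and the step I expect to demand the most care, is precisely this lower bound on $p_1$. Carnot-Carath\'eodory balls in $N_{n,2}$ are compact, and $p_1$ is continuous and strictly positive, being the time-one value of a H\"ormander-type hypoelliptic heat kernel on a connected bracket-generating group. Consequently $c_0:=\inf_{d(g)\le1}p_1(g)>0$, and on the near region the previous display gives $\sqrt{\Gamma(\log p_1)(g)}\le (C/c_0)\,d(g)$. Putting the two regimes together produces the uniform time-one bound $\sqrt{\Gamma(\log p_1)(g)}\le C'\,d(g)$ on all of $N_{n,2}$ with $C'=\max\{2C,\,C/c_0\}$, and the scaling reduction of the first paragraph then delivers $\sqrt{\Gamma(\log p_t)(g)}\le C'\,d(g)/t$. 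Apart from the continuity and positivity of $p_1$ on the compact unit ball, everything is bookkeeping with the dilation semigroup and the two cited propositions.
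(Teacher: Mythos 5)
Your proposal is correct and follows essentially the same route as the paper: reduce to $t=1$ via the dilation/scaling identity (\ref{scale}), use Proposition \ref{nBM-gradient10} when $d(g)\ge1$, and combine Proposition \ref{smallvalue} with a positive lower bound for $p_1$ on the unit Carnot--Carath\'eodory ball when $d(g)\le1$. The only difference is cosmetic: you justify the lower bound on $p_1$ by compactness and strict positivity of the hypoelliptic kernel, where the paper simply cites \cite{VSC}.
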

\begin{proof}
Taking $t=1$ in Proposition \ref{nBM-gradient10}, we have
$$
\sqrt{\Gamma(\log p)(g)}\le C\left(d(g)+1\right).
$$
If $d(g)\ge1$, it is trivial to get the desired result from the above gradient estimate. For the case $d(g)\le 1$, note that the heat kernel is bounded below by a positive constant (see \cite{VSC}), combining with Proposition \ref{smallvalue}, we have  for some positive constant $C$ (dependent on $n$),
$$
\Gamma(\log p)(g)\le Cd(g), \ \ g\in N_{n,2}.
$$
The desired result follows by the time scaling property
(\ref{scale}).
\end{proof}

\begin{remark}
In a forthcoming paper, we shall study the gradient estimates for the heat kernels of the sub-elliptic operators, which satisfy the generalized curvature dimension inequalities $CD(\rho_1,\rho_2,k,d)$ introduced by F. Baudoin and N. Garofalo in \cite{BG}.
\end{remark}

\vskip 24pt {\bf Acknowledgement}: The author would like to express
sincere thanks to Prof. H. Q. Li (Fudan University, China) for his many helpful discussion, especially for the improvement of Proposition \ref{nBM-gradient1}, also to Prof. D. Bakry (Toulouse University III, France) and F. Baudoin (Purdue University, U. S. A.)for their interest. He is greatly indebted to Prof. L. M. Wu (Clermont-Ferrand University II, France), Prof. D. Bakry  and Prof. X. D. Li (Institute of Applied Mathematics, Academia Sinica, China) for their constant encouragement and support. He also acknowledges the financial support from China (and Shanghai) Postdoctoral Scientific Program No. 20110490667 (No. 11R21412200) and from  National Science Funds (and Tianyuan fund  for Mathematics) of China No. 11171070 (No. 11126345).

\end{document}